\numberwithin{equation}{section}
\numberwithin{figure}{section}
\numberwithin{table}{section}
\long\def\MSC#1\EndMSC{\def\arg{#1}\ifx\arg\empty\relax\else
     {\narrower\noindent%
{2020 Mathematics Subject Classification}: #1\\} \fi}
\long\def\PACS#1\EndPACS{\def\arg{#1}\ifx\arg\empty\relax\else
     {\narrower\noindent%
{PACS numbers}: #1}\fi}
\long\def\KEY#1\EndKEY{\def\arg{#1}\ifx\arg\empty\relax\else
	{\narrower\noindent%
Keywords: #1\\}\fi}
\theoremstyle{plain}
\newtheorem{theorem}{Theorem}[section]
\theoremstyle{definition}
\theoremstyle{remark}
\newtheorem{remark}[theorem]{Remark}
\newcommand{\norm}[1]{\lVert#1\rVert}
\newcommand{\abs}[1]{\lvert#1\rvert} 
\newcommand{\inner}[1]{\langle#1\rangle}
\newcommand{\spanm}{\mathop{\textup{span}}}
\newcommand{\di}{\mathrm{d}}   
\newcommand{\N}{\mathbb{N}}
\begin{document}

\title[Reconstruction from finite data]{Reconstruction in the Calder\'on problem on a fixed partition from finite and partial boundary data}

\author[H.~Garde]{Henrik Garde}
\address[H.~Garde]{Department of Mathematics, Aarhus University, Aarhus, Denmark.}
\email{garde@math.au.dk}

\begin{abstract}
	This short note modifies a reconstruction method by the author (Comm.~PDE, 45(9):1118--1133, 2020), for reconstructing piecewise constant conductivities in the Calder\'on problem (electrical impedance tomography). In the former paper, a layering assumption and the local Neumann-to-Dirichlet map were needed since the piecewise constant partition also was assumed unknown. Here I show how to modify the method in case the partition is known, for general piecewise constant conductivities and only a finite number of partial boundary measurements. Moreover, no lower/upper bounds on the unknown conductivity are needed.
\end{abstract}

\maketitle

\KEY
Calder\'on problem,
electrical impedance tomography, 
partial data reconstruction.
\EndKEY

\MSC
35R30, 
35Q60, 
35R05, 
47H05. 
\EndMSC

\section{Introduction}

In~\cite{Garde2020b} a direct reconstruction method is given for the Calder\'on problem with piecewise constant layered conductivities. Here, successively, each layer is reconstructed and the conductivity values on each component of the layer are determined by simple one-dimensional optimization problems. This approach is improved in~\cite{Garde2022}, using the theory of inclusion detection for extreme conductivity values, simplifying the involved test-operators and removing the need for bounds on the conductivity.

In the aforementioned papers, infinitely many partial boundary measurements are used in the form of the local Neumann-to-Dirichlet (ND) map. This is needed because the setting is infinite-dimensional, in the sense that the piecewise constant partition (and even the number of ``pixels'') is also assumed unknown, and must be reconstructed. Later~\cite{Harrach2023} expanded on this approach, showing that for a \emph{fixed known partition}, a piecewise constant conductivity can be found as the unique solution to a semidefinite optimization problem, using just a finite number of measurements. However, this new approach also comes with the caveat that there is an unknown set of coefficients for the optimization problem depending on the domain, the partition, and lower/upper bounds on the conductivity. Both in~\cite{Harrach2023} and the related paper~\cite{Alberti2025}, it is mentioned that the approach from~\cite{Garde2020b,Garde2022} adapts to their settings, but requires infinitely many measurements. 

In this short note, I will show how the second part of the algorithm in~\cite{Garde2020b,Garde2022} can be adapted to reconstruction of a general piecewise constant conductivity on a given fixed partition, and moreover, that up to an arbitrarily small error, only a finite number of boundary measurements are needed.

Hence, there are a number of advantages/disadvantages compared to the approach in~\cite{Harrach2023}:
\begin{itemize}
	\item It seems preferable to reconstruct the entire conductivity function simultaneously as in~\cite{Harrach2023}, although one should note that in \cite{Harrach2023} the coefficients of the optimization problem are not immediately available. This is unlike in the approach given in this manuscript, where the involved one-dimensional optimization problems are readily available and very simple.
	\item In this approach, no lower/upper bounds are needed on the unknown conductivity.
	\item This approach adapts to \emph{region-of-interest} (ROI) problems. If there is only interest in the conductivity in part of the domain, one can reconstruct a path from the domain boundary to the ROI, thus avoiding reconstruction in the remaining part to the domain. 
\end{itemize}

\section{The setting}

Let $\Omega\subset \mathbb{R}^d$, $d\geq 2$, be a bounded Lipschitz domain. Let $\nu$ be an outer unit normal to $\Omega$, and let $\Gamma\subseteq\partial\Omega$ be a non-empty relatively open subset where the boundary measurements are taken. 

Consider the partial data conductivity problem, 
\begin{equation} \label{eq:condeq}
	-\nabla\cdot(\sigma\nabla u) = 0 \quad\text{in } \Omega, \qquad \nu\cdot(\sigma\nabla u)|_{\partial\Omega} = \begin{cases}
		f &\text{on } \Gamma, \\
		0 &\text{on } \partial\Omega\setminus\Gamma,
	\end{cases}
\end{equation}
where $f$ belongs to
\begin{equation*}
	L^2_\diamond(\Gamma) = \{\, f\in L^2(\Gamma) : \inner{f,1} = 0 \,\}.
\end{equation*}
Here $\inner{\,\cdot\,,\,\cdot\,}$ refers to the usual inner product on $L^2(\Gamma)$. For the electric potential~$u$ we also enforce a $\Gamma$-mean free condition. A conductivity coefficient $\sigma$ can in general be nonnegative and measurable. We assume that $\sigma$ can formally equal zero or infinity on Lipschitz sets; such sets are called extreme inclusions (c.f.~\cite{Garde2020a}). Away from the extreme inclusions, $\sigma$ is assumed to be bounded away from zero and infinity. Note that the theory in~\cite{Garde2020a} also adapts to the case where such extreme inclusions touch the domain boundary, provided the non-extreme part is connected to $\Gamma$, i.e.~can be ``seen'' by the boundary measurements; the proofs are related to the weak form of the forward problem that readily adapts to this setting.

This gives rise to a local Neumann-to-Dirichlet (ND) map on $\Gamma$, which is a compact self-adjoint operator $\Lambda(\sigma)\colon f\mapsto u|_\Gamma$ on $L^2_\diamond(\Gamma)$. In the following, we will use ND maps for various conductivity coefficients taking the role of $\sigma$. We will denote the \emph{unknown} conductivity as $\gamma$, that we seek to reconstruct from local boundary measurements.

We assume that 
\begin{equation*}
	\overline{\Omega} = \bigcup_{j=1}^n \overline{P_j},
\end{equation*}
where $P_1,\dots,P_n\subseteq\Omega$ are non-empty pairwise disjoint Lipschitz domains. The sets are ordered such that $\partial P_1\cap \Gamma$ contains a non-empty open boundary piece, and for any $m\in\{1,\dots,n\}$ then
\begin{equation*}
	Q_m = \bigcup_{j=1}^m \overline{P_j}
\end{equation*}
has connected interior. See Figure~\ref{fig:fig1} for some examples; of course the sets $\{P_j\}_{j=1}^n$ are not required to be ``pixel-shaped'', but can be arbitrary Lipschitz domains.

The unknown conductivity $\gamma$ is assumed to be piecewise constant, satisfying
\begin{equation*}
	\gamma|_{P_j} \equiv \gamma_j
\end{equation*}
for some $\gamma_j \in (0,\infty)$, $j=1,\dots,n$. 

\begin{figure}[htb]
	\centering
	\includegraphics[width=\textwidth]{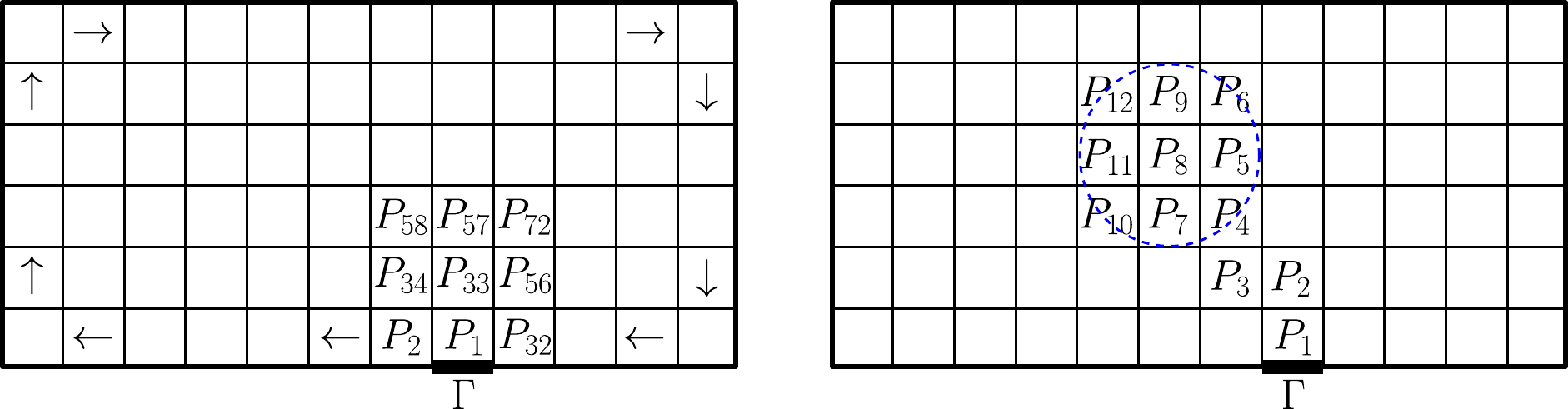}
	\caption{Examples of orderings of the $P_j$ sets. Left: Reconstructing the outermost pixels first, then proceeding to the next ``layer'' of pixels. Right: Example of a ROI (blue dashed outline), where the reconstruction is adapted to this.}
	\label{fig:fig1}
\end{figure}

We will now proceed to show how to reconstruct $\gamma_1,\dots,\gamma_n$ successively. We start by showing how this is done for infinitely many measurements $\Lambda(\gamma)$ in Section~\ref{sec:infinite}, and afterwards show that this can be reduced to finitely many measurements in Section~\ref{sec:finite}.

\section{The reconstruction method for infinite measurements} \label{sec:infinite}

The method starts at $P_1$, reconstructs $\gamma_1$ via a one-dimensional optimization problem, and moves on to $P_2$. Successively all of $\gamma$ is reconstructed on $P_1,\dots,P_n$. Hence, we will assume that we know $\gamma$ on $Q_{m-1}$ (with the convention that $Q_0 = \emptyset$) and now need to reconstruct $\gamma_m$ on $P_m$. 

We let 
\begin{equation*}
	\Gamma_m = (\partial Q_m \cap \Gamma)^\circ,
\end{equation*}
where the interior is taken relative to $\partial\Omega$. Now define $\mathcal{R}_m \colon L^2_\diamond(\Gamma) \to L^2_\diamond(\Gamma_m)$ by
\begin{equation*}
	\mathcal{R}_m f = \begin{dcases}
		f - \abs{\Gamma_m}^{-1}\inner{f,1}_{L^2(\Gamma_m)} & \text{on } \Gamma_m \\
		0 & \text{elsewhere}.
	\end{dcases}
\end{equation*}
A short computation reveals that $\mathcal{R}_m$ is the orthogonal projection onto $L^2_\diamond(\Gamma_m)$, when $L^2_\diamond(\Gamma_m)$ is considered a subspace of $L^2_\diamond(\Gamma)$ via extension by zero. We can now define
\begin{equation*}
	\Lambda_m(\gamma) = \mathcal{R}_m\Lambda(\gamma)\mathcal{R}_m,
\end{equation*}
which corresponds to the local ND map measured on $\Gamma_m$ (when restricted to this set). This is used for the compatibility with the following simulated measurement operators, where initially $\Gamma_m$ may be smaller than $\Gamma$ and is the only accessible part of the boundary for those simulated operators.

For $t>0$ and with $\mu$ either representing $0$ (perfectly insulating) or $\infty$ (perfectly conducting), we denote by $\Lambda_{m,\mu}(t)$ the local ND map (on $\Gamma_m$) for the conductivity
\begin{equation*}
	\begin{dcases}
		\mu &\text{in } \overline{\Omega}\setminus Q_m, \\
		t & \text{in } P_m, \\
		\gamma &\text{in } Q_{m-1}.
	\end{dcases}
\end{equation*}
Applying \cite[Lemma A.1]{Garde2020a} and \cite[Cases (a) and (b) in the proof of Theorem~3.7]{Garde2020a} entail the following monotonicity relations, in terms of the Loewner order of the involved operators on $L^2_\diamond(\Gamma_m)$:
\begin{align}
	t &\geq \gamma_m \qquad \text{if and only if} \qquad \Lambda_m(\gamma) \geq \Lambda_{m,\infty}(t), \label{eq:recon2a}\\
	t &\leq \gamma_m \qquad \text{if and only if} \qquad \Lambda_{m,0}(t) \geq \Lambda_m(\gamma). \label{eq:recon2b}
\end{align}
Hence, one can solve either of the following one-dimensional optimization problems:
\begin{equation*}
	\gamma_m = \min\{\, t>0 : \Lambda_m(\gamma) \geq \Lambda_{m,\infty}(t) \,\}
\end{equation*}
or
\begin{equation*}
	\gamma_m = \max\{\, t>0 : \Lambda_{m,0}(t) \geq \Lambda_m(\gamma) \,\}.
\end{equation*}
Note that \eqref{eq:recon2a} and \eqref{eq:recon2b} show that $t = \gamma_m$ is the unique value where it changes from ``the operator inequality is satisfied'' to ``the operator inequality is \emph{not} satisfied''.
\begin{remark}
	Note the interesting fact, that the approach using $\Lambda_{m,0}(t)$ corresponds to ``building'' the domain one pixel $P_m$ at a time, since having a perfectly insulating conductivity coefficient corresponds to deleting that part of the domain in the PDE problem. 
\end{remark}
\begin{remark}
	If there are known uniform lower/upper bounds $0<\alpha\leq\gamma\leq \beta$, then one can replace $0$ by $\alpha$ and $\infty$ by $\beta$ in the definitions of $\Lambda_{m,0}$ and $\Lambda_{m,\infty}$ and restrict $t\in[\alpha,\beta]$. 
	
	Moreover, this avoids the need for defining $\Gamma_m$ and using $\mathcal{R}_m$, since these are only introduced for the sake of $\Lambda_{m,0}$ and $\Lambda_{m,\infty}$, with a conductivity possibly taking an extreme value on part of the original measurement set $\Gamma$.
\end{remark}

\section{The reconstruction method for finite measurements} \label{sec:finite}

Let $\{g_i\}_{i\in\N}$ be an orthonormal basis for $L^2_\diamond(\Gamma)$ and let $\mathcal{P}_M$ be the orthogonal projection onto
\begin{equation*}
	\spanm\{g_i\}_{i=1}^M.
\end{equation*}
We will now show that, up to an arbitrarily small error $\epsilon>0$, there exists an $M\in\N$, such that the operator inequalities $\Lambda_m(\gamma) \geq \Lambda_{m,\infty}(t)$ or $\Lambda_{m,0}(t) \geq \Lambda_m(\gamma)$ can be replaced by the finite-dimensional versions given by
\begin{equation*} 
	\mathcal{P}_M\bigl(\Lambda_m(\gamma) - \Lambda_{m,\infty}(t)\bigr)\mathcal{R}_m\mathcal{P}_M \geq 0
\end{equation*}
or 
\begin{equation*} 
	\mathcal{P}_M\bigl(\Lambda_{m,0}(t) - \Lambda_m(\gamma)\bigr)\mathcal{R}_m\mathcal{P}_M \geq 0.
\end{equation*}
In particular, the following theorem gives an $M_m$ for each $m\in\{1,\dots,n\}$ needed for the different optimization problems, and we may therefore use the finite number
\begin{equation*}
	M = \max_m M_m
\end{equation*}
of boundary measurements.
\begin{theorem}
	For any $\epsilon>0$, there exists $M_m\in\N$ such that for all $M\geq M_m$, in the setting of Section~\ref{sec:infinite},  
	\begin{alignat}{4}
		t &\geq \gamma_m   &&\text{implies}\qquad   &\mathcal{P}_{M}\bigl(\Lambda_m(\gamma) - \Lambda_{m,\infty}(t)\bigr)\mathcal{R}_m\mathcal{P}_{M} &\geq 0, \label{eq:finite}\\
		t &< \gamma_m-\epsilon \qquad&&\text{implies}\qquad  &\mathcal{P}_{M}\bigl(\Lambda_m(\gamma) - \Lambda_{m,\infty}(t)\bigr)\mathcal{R}_m\mathcal{P}_{M} &\not\geq 0, \label{eq:finiteb}\\
		t &\leq \gamma_m  &&\text{implies}\qquad  &\mathcal{P}_{M}\bigl(\Lambda_{m,0}(t) - \Lambda_m(\gamma)\bigr)\mathcal{R}_m\mathcal{P}_{M} &\geq 0, \label{eq:finite2} \\
		t &> \gamma_m+\epsilon  &&\text{implies}\qquad  &\mathcal{P}_{M}\bigl(\Lambda_{m,0}(t) - \Lambda_m(\gamma)\bigr)\mathcal{R}_m\mathcal{P}_{M} &\not\geq 0. \label{eq:finite2b}
	\end{alignat}
\end{theorem}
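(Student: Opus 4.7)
The plan is to treat the four implications in two pairs. The nonnegativity conclusions \eqref{eq:finite} and \eqref{eq:finite2} are soft and hold for every $M\in\N$; the strict failure conclusions \eqref{eq:finiteb} and \eqref{eq:finite2b} are where the actual choice of $M_m$ has to be made.

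For \eqref{eq:finite}: if $t\geq\gamma_m$ then \eqref{eq:recon2a} already gives $\Lambda(\gamma)-\Lambda_{m,\infty}(t)\geq 0$, and since $B\geq 0$ implies $\mathcal{P}_M B\mathcal{P}_M\geq 0$ for any orthogonal projection $\mathcal{P}_M$, we are done with no restriction on $M$. The implication \eqref{eq:finite2} is identical, using \eqref{eq:recon2b}.

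The real work is \eqref{eq:finiteb}, with \eqref{eq:finite2b} being entirely symmetric. Set $t^* := \gamma_m-\epsilon$. By the Loewner monotonicity of ND maps in the coefficient (larger coefficient $\Rightarrow$ smaller ND map), the map $t\mapsto \Lambda_{m,\infty}(t)$ is monotonically decreasing, hence for every $t\leq t^*$ one has
\begin{equation*}
	\Lambda(\gamma)-\Lambda_{m,\infty}(t) \;\leq\; \Lambda(\gamma)-\Lambda_{m,\infty}(t^*) \;=:\; A,
\end{equation*}
and this Loewner inequality is preserved after sandwiching by $\mathcal{P}_M$. Since $A \leq B$ together with $\mathcal{P}_M B \mathcal{P}_M \not\geq 0$ forces $\mathcal{P}_M A \mathcal{P}_M \not\geq 0$ (the same witness vector works), it suffices to produce $M_m$ so that $\mathcal{P}_M A \mathcal{P}_M \not\geq 0$ for all $M\geq M_m$. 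Because $t^* < \gamma_m$, relation \eqref{eq:recon2a} fails at $t^*$, so there exists a unit $f_\epsilon\in L^2_\diamond(\Gamma)$ with $c_\epsilon := \inner{A f_\epsilon, f_\epsilon} < 0$. The boundedness of the fixed operator $A$ together with the strong convergence $\mathcal{P}_M f_\epsilon\to f_\epsilon$ yields the routine two-sided estimate
\begin{equation*}
	\bigl|\inner{A\mathcal{P}_M f_\epsilon,\mathcal{P}_M f_\epsilon}-c_\epsilon\bigr| \;\leq\; 2\|A\|\,\|(I-\mathcal{P}_M) f_\epsilon\|,
\end{equation*}
so choosing $M_m$ large enough that $\|(I-\mathcal{P}_M) f_\epsilon\| \leq |c_\epsilon|/(4\|A\|)$ for all $M\geq M_m$ gives
\begin{equation*}
	\inner{\mathcal{P}_M A\mathcal{P}_M f_\epsilon, f_\epsilon} \;=\; \inner{A\mathcal{P}_M f_\epsilon, \mathcal{P}_M f_\epsilon} \;\leq\; c_\epsilon/2 \;<\; 0,
\end{equation*}
which finishes \eqref{eq:finiteb}. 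The argument for \eqref{eq:finite2b} is the mirror image, anchored at $t^* := \gamma_m+\epsilon$, using $\Lambda_{m,0}(t)\leq \Lambda_{m,0}(t^*)$ for $t\geq t^*$.

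I do not foresee a substantive obstacle. The only point at which a genuine difficulty could have surfaced is uniformity in $t$, e.g.\ a uniform operator-norm bound on $\Lambda_{m,\infty}(t)$ as $t\downarrow 0$; the endpoint-reduction via Loewner monotonicity sidesteps this entirely, since after the reduction only the single fixed operator $A$ at $t^*$ enters the projection estimate.
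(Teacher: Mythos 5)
Your proof is correct, but it takes a genuinely different route from the paper at the key step. The paper re-runs the quantitative machinery: it splits $\Lambda(\gamma)-\Lambda_{m,\infty}(t)$ through the intermediate map $\Lambda_\infty$, applies the monotonicity estimate of \cite[Lemma~A.1]{Garde2020a} to obtain the $t$-independent upper bound \eqref{eq:ineqthm}, invokes localized potentials \cite[Lemmas~5.2 and~5.3]{Garde2020a} to drive that bound to $-\infty$ and extract a witness $g$, and then uses continuity of the Neumann-to-solution maps $f\mapsto u_f$, $f\mapsto\widehat{u}_f$ to show the same strict negativity survives when $g$ is replaced by $\mathcal{P}_Mg$. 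You instead reduce to the single endpoint operator $A=\Lambda(\gamma)-\Lambda_{m,\infty}(\gamma_m-\epsilon)$ via Loewner monotonicity of $t\mapsto\Lambda_{m,\infty}(t)$, obtain a witness directly from the ``only if'' direction of \eqref{eq:recon2a} (which is where the localized-potentials content is already encapsulated), and pass to finite $M$ by the soft argument $\mathcal{P}_Mf_\epsilon\to f_\epsilon$ plus boundedness of the fixed operator $A$. Both arguments deliver the required uniformity in $t$: the paper because the right-hand side of \eqref{eq:ineqthm} does not depend on $t$, you because the same witness works for every $t<\gamma_m-\epsilon$ after the endpoint reduction. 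What your version buys is a lighter finite-dimensional step (no continuity of the solution map, only operator norms and nested projections); what it costs is an extra ingredient you state without justification, namely that $t\mapsto\Lambda_{m,\infty}(t)$ and $t\mapsto\Lambda_{m,0}(t)$ are Loewner-monotone even though the background coefficient contains a perfectly conducting (respectively insulating) region $\overline{\Omega}\setminus Q_m$. This is true, but in the extreme-inclusion setting it is not a ``textbook'' fact; it follows from exactly the monotonicity estimate \cite[Lemma~A.1]{Garde2020a} that the paper uses, applied to two coefficients sharing the same extreme set and differing only on $P_m$, and you should cite it explicitly at that point.
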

\begin{proof}
	We will focus on \eqref{eq:finite}--\eqref{eq:finiteb}, since the proof related to \eqref{eq:finite2}--\eqref{eq:finite2b} is essentially the same. 
	
	If $t\geq \gamma_m$ then, by the self-adjointness of $\mathcal{P}_{M}$ and $\mathcal{R}_m$, \eqref{eq:recon2a} gives
	\begin{equation*}
		\inner{\mathcal{P}_{M}\bigl(\Lambda_m(\gamma) - \Lambda_{m,\infty}(t)\bigr)\mathcal{R}_m\mathcal{P}_{M}f,f} = \inner{\bigl(\Lambda_m(\gamma) - \Lambda_{m,\infty}(t)\bigr)\mathcal{R}_m\mathcal{P}_{M}f,\mathcal{R}_m\mathcal{P}_{M}f} \geq 0
	\end{equation*}
	for all $f\in L^2_\diamond(\Gamma)$ and any $M\in\N$. 
	
	Now assume $t < \gamma_m-\epsilon$. Let $\Lambda_\infty$ be the ND map (on $\Gamma_m$) for the conductivity
	\begin{equation} \label{eq:gammatmp}
		\begin{dcases}
			\infty &\text{in } \overline{\Omega}\setminus Q_m, \\
			\gamma &\text{in } Q_{m}.
		\end{dcases}
	\end{equation}
	Let $f\in L^2_\diamond(\Gamma_m)$. By \cite[Lemma~A.1 parts (ii) and (i)]{Garde2020a},
	\begin{align}
		\inner{\bigl(\Lambda_m(\gamma) - \Lambda_{m,\infty}(t)\bigr)f,f} &= \inner{\bigl(\Lambda_m(\gamma) - \Lambda_{\infty}\bigr)f,f} + \inner{\bigl(\Lambda_\infty - \Lambda_{m,\infty}(t)\bigr)f,f} \notag\\
		&\leq K\int_{\Omega\setminus Q_m}\abs{\nabla u_f}^2\,\di x + (t-\gamma_m)\int_{P_m}\abs{\nabla \widehat{u}_f}^2\,\di x \notag \\
		&\leq K\int_{\Omega\setminus Q_m}\abs{\nabla u_f}^2\,\di x -\epsilon\int_{P_m}\abs{\nabla \widehat{u}_f}^2\,\di x. \label{eq:ineqthm}
	\end{align}
	Here $K>0$ is independent of $f$ and $t$, $u_f$ is the electric potential with Neumann condition $f$ and conductivity $\gamma$, while $\widehat{u}_f$ is the electric potential with Neumann condition $f$ and the conductivity from \eqref{eq:gammatmp}. Hence, by the theory of localized potentials \cite[Lemmas~5.2 and~5.3]{Garde2020a}, there exists a sequence $(f_i)$ in $L^2_\diamond(\Gamma_m)$ such that 
	\begin{equation} \label{eq:upperlim}
		\lim_{i\to\infty} \Bigl[K\int_{\Omega\setminus Q_m}\abs{\nabla u_{f_i}}^2\,\di x -\epsilon\int_{P_m}\abs{\nabla \widehat{u}_{f_i}}^2\,\di x\Bigr] = -\infty.
	\end{equation}
	Pick an index $i_0$ such that the expression in the square brackets of \eqref{eq:upperlim} is negative and let $g = f_{i_0}$. Note that
	\begin{equation*}
		\norm{g - \mathcal{R}_m\mathcal{P}_M g} = \norm{\mathcal{R}_m(g - \mathcal{P}_M g)} \leq \norm{g - \mathcal{P}_M g} \to 0 \text{ for } M\to\infty.
	\end{equation*}
	By continuity of $f\mapsto u_f$ and $f\mapsto \widehat{u}_f$, there exists an $M_m\in\N$ such that for all $M\geq M_m$,
	\begin{equation*} 
		K\int_{\Omega\setminus Q_m}\abs{\nabla u_{\mathcal{R}_m\mathcal{P}_Mg}}^2\,\di x -\epsilon\int_{P_m}\abs{\nabla \widehat{u}_{\mathcal{R}_m\mathcal{P}_Mg}}^2\,\di x < 0.
	\end{equation*}
	Hence, for any $t < \gamma_m - \epsilon$ and any $M\geq M_m$, \eqref{eq:ineqthm} gives
	\begin{equation*}
		\inner{\mathcal{P}_M\bigl(\Lambda(\gamma) - \Lambda_{m,\infty}(t)\bigr)\mathcal{R}_m\mathcal{P}_Mg,g} = \inner{\bigl(\Lambda(\gamma) - \Lambda_{m,\infty}(t)\bigr)\mathcal{R}_m\mathcal{P}_Mg,\mathcal{R}_m\mathcal{P}_Mg} < 0. \qedhere
	\end{equation*}
\end{proof}

\subsection*{Acknowledgements}

This research is supported by grant 10.46540/3120-00003B from Independent Research Fund Denmark.

\bibliographystyle{plain}

\end{document}